\newcommand{\pnote}[1]{\textcolor{purple}{\small {\textbf{(Pepe:} \textbf{) }}}}
\newcommand{\cnote}[1]{\textcolor{red}{\small {\textbf{(Carmen:} \textbf{) }}}}
\newcommand{\dnote}[1]{\textcolor{blue}{\small {\textbf{(Dim:} \textbf{) }}}}
\newtheorem{theorem}{Theorem}[section]
\newtheorem{lemma}{Lemma}[section]
\DeclareRobustCommand{\qed}{%
  \ifmmode 
  \else \leavevmode\unskip\penalty9999 \hbox{}\nobreak\hfill
  \fi
  \quad\hbox{\qedsymbol}}
\newcommand{\openbox}{\leavevmode
  \hbox to.77778em{%
  \hfil\vrule
  \vbox to.675em{\hrule width.6em\vfil\hrule}%
  \vrule\hfil}}
\newcommand{\qedsymbol}{\openbox}
\newenvironment{proof}[1][\proofname]{\par
  \normalfont
  \topsep6\p@\@plus6\p@ \trivlist
  \item[\hskip\labelsep\itshape
    #1.]\ignorespaces
}{%
  \qed\endtrivlist
}
\newcommand{\proofname}{Proof}
\theoremstyle{definition}
\newtheorem{remark}{Remark}
\newcommand{\lemref}[1]{Lemma \ref{#1}}
\newcommand{\figref}[1]{Figure \ref{#1}}
\newcommand{\bb}[1]{\mathbf{#1}}
\newcommand{\Vmaxin}{\bar{\mathcal{V}}^{in}}
\newcommand{\Vminin}{\underline{\mathcal{V}}^{in}}
\newcommand{\Vmaxout}{\bar{\mathcal{V}}^{out}}
\newcommand{\Vminout}{\underline{\mathcal{V}}^{out}}
\newcommand{\Vin}{\mathcal{V}^{in}}
\newcommand{\Dout}{\mathcal{D}^{out}}
\newcommand{\Vout}{\mathcal{V}^{out}}
\newcommand{\PhiX}{\mathbf{\Phi}_x}
\newcommand{\PhiU}{\mathbf{\Phi}_u}
\newcommand{\PhiXtk}[2]{{\Phi}^{#1,#2}_x}
\newcommand{\Phitkij}[4]{\left[{\Phi}^{#1,#2}\right]_{#3,#4}}
\newcommand{\PhiXtkij}[4]{\left[{\Phi}^{#1,#2}_x\right]_{#3,#4}}
\newcommand{\PhiUtk}[2]{{\Phi}^{#1,#2}_u}
\newcommand{\PhiUtkij}[4]{\left[{\Phi}^{#1,#2}_u\right]_{#3,#4}}
\newcommand{\Deltatkij}[4]{\left[{\Delta}^{#1,#2}\right]_{#3,#4}}
\newcommand{\SetS}{\mathcal{S}}
\begin{document}
\begin{frontmatter}

\title{Distributed Linear Quadratic Regulator Robust to Communication Dropouts\thanksref{footnoteinfo}} 

\thanks[footnoteinfo]{This work was partially supported by the European Research Council (Advanced Research Grant $769051$-OCONTSOLAR) and the MINECO-Spain project DPI2017-86918-R.}

\author[First]{C. Amo Alonso} 
\author[First]{D. Ho} 
\author[Second]{J.M. Maestre} 

\address[First]{Computing and Mathematical Sciences, California Institute of Technology, 
   Pasadena, CA, USA (e-mail: camoalon@caltech.edu, dho@caltech.edu).}
\address[Second]{Dep. of Systems and Automation Engineering, University of Seville, Spain (e-mail: pepemaestre@us.es)}

\begin{abstract}                
We present a solution to deal with information package dropouts in distributed controllers for large-scale networks. We do this by leveraging the System Level Synthesis approach, a control framework particularly suitable for large-scale networks that addresses information exchange in a very transparent manner. To this end, we propose two different schemes for controller synthesis and implementation. The first one synthesizes a controller inherently robust to dropouts, which is later implemented in an offline fashion. For the second approach, we synthesize a collection of controllers offline and then switch between different controllers online depending on the current dropouts detected in the system. The two approaches are illustrated and compared by means of a simulation example.

\end{abstract}

\begin{keyword}
Distributed Control, System Level Synthesis, Modular Control, Communication Dropouts.
\end{keyword}

\end{frontmatter}

\section{Introduction}
\vspace{-3mm}

In distributed control methods, local controllers coordinate their actions to deal with mutual interaction issues and improve overall system performance. The information exchanged is typically performed using a communication network, which is a potential source of vulnerability due to well-known problems as packet losses and cyber-aggressions. See for example~\citep{lun2019state} for a comprehensive review of cyber-physical systems security and~\citep{sandberg2015cyberphysical}, where attack and defense strategies are shown for Network Control Systems. 

Unreliable networks where packet dropouts can occur have been explored in the past through many different control methods. For instance, in \citep{quevedo2015stochastic,mishra2018stabilizing} the features of stochastic model predictive control are exploited to buffer the input sequence and mitigate communication losses, and in~\citep{cetinkaya2015event}, where time-inhomogeneous Markov chains are used to model random packet losses in a control scheme where a feedback gain is used. 

In this paper, we focus on a version of the linear quadratic regulator (LQG) problem that incorporates communication constraints as well as probabilistic communication dropouts between subcontrollers. To do this, we study this problem in the context of the recently proposed System Level Synthesis (SLS) framework~\citep{anderson_system_2019, wang_separable_2017, matni_scalable_2017}, which allows for the synthesis and implementation of distributed and localized controllers in a scalable way, making it a very suitable framework for large-scale networks. In particular, the SLS framework explicitly accounts for the information structure of the network, so the sub-controllers are sparsely connected, i.e., each of the sub-controllers only exchanges information with other sub-controllers within its communication range. Besides, it allows to localize the effects of the disturbances within that range, which ultimately allows the subcontrollers to only access local information and solve for local subproblems of much smaller complexity than the complexity of the whole problem. Given this ability to localize, together with the explicit dependence on the information structure, it is important to analyze how an SLS approach can be affected by packet losses, and also to propose means to relieve this issue. 

In this context, our main contribution is an algorithm for synthesis and implementation of SLS controllers that allows for a time-varying communication topology to deal with packet losses. To this end, two different approaches are proposed:
\begin{itemize}
    \item Offline distributed controller synthesis and implementation of SLS controllers intrinsically robust to communication dropouts. This requires minimal online computations.

    \item Offline synthesis of SLS controllers together with an online implementation strategy that adapts the controllers to different communication topologies based on the sensed package losses.
    In contrast to the first approach, this procedure requires more online computations, but comes with increased control performance as it can adapt to changing communication topologies.
\end{itemize} 

Since SLS is a natural framework to perform distributed control and the communication structure appears explicitly in the formulation, the presented derivations are carried out by leveraging the SLS framework. All the results are suitably distributed among the sub-controllers of the network. Finally, a simulation example is given to illustrate the proposed methods. 

The remainder of this paper is organized as follows. In Section 2, we present the problem formulation. Section 3 introduces the offline distributed synthesis and implementation strategies using the robust SLS framework, and we provide a result for the distribution of the robust SLS synthesis. Section 4 presents an algorithm for offline distributed synthesis of SLS robust controllers together with an online distributed implementation that allows to tackle the communication dropouts. In Section 5, we illustrate the proposed approach via simulation. We end in Section 6 with conclusion, and directions for future work. 

\textbf{Notation.} Lower-case and upper-case Latin and Greek letters such as $x$ and $A$ denote vectors and matrices respectively, although lower-case letters might also be used for scalars or functions (the distinction will be apparent from the context). We use bracketed indices to denote the time of the true system, i.e., the system is at state $x(t)$ at time $t$. Subscripts denote time indices within a loop, i.e., $x_t$ denotes the $t^\mathrm{th}$ state within the loop. To denote subsystem variables, we use square bracket notation, i.e. $[x]_{i}$ denotes the components of vector $x$ that correspond to subsystem $i$. Boldface lower and upper case letters such as $\mathbf{x}$ and $\mathbf{K}$ denote finite horizon signals and lower block triangular operators, respectively:
\begin{equation*} 
\mathbf{x}=\left[\begin{array}{c} x_{0}\\x_{1}\\\vdots\end{array}\right],\
    \mathbf{K}=\left[\begin{array}{cccc}K^{0,1} & 0 & & \\ K^{0,2} & K^{1,1} & & \\ \vdots & \ddots & \ddots  \end{array}\right],
\end{equation*}
where each $K^{i,j}$ is a matrix of compatible dimension. In this notation, $\mathbf{K}$ is the matrix representation of the convolution operation induced by a time varying controller $K_t(x_{0:t})$, so that $u_{t}=\mathbf{K}^{t,:}\mathbf{x}$, where $\mathbf{K}^{t,:}$ represents the $t$-th block-row of $\mathbf{K}$. 

\section{Problem Statement}
\vspace{-3mm}
Consider a discrete-time linear time invariant (LTI) dynamical system with dynamics:
\begin{equation}
    x(t+1)=Ax(t)+Bu(t)+w(t),
    \label{eq: dynamics}
\end{equation}
where $x(t)\in\mathbb{R}^{n}$ is the state, $u(t)\in\mathbb{R}^{p}$ is the control input, and $w(t)\in\mathbb{R}^{n}$ is an exogenous disturbance, which we assume to be additive-white Gaussian noise. The system is structured and can be described as a collection of $N$ interconnected subsystems, with local state, control, and disturbance inputs 
given by $[x]_i$, $[u]_i$, and $[w]_i$ for each subsystem $i$. Accordingly if $\mathcal{S}$ is a set of subsystems, we will understand $[x]_{\mathcal{S}}$, $[u]_{\mathcal{S}}$ to be the concatenation of $[x]_i$, $[u]_i$ for all $i\in \mathcal{S}$. The matrices $A$ and $B$ can also be partitioned into a compatible local block structure $[A]_{ij}$, $[B]_{ij}$ so the local dynamics for subsystem $i$ are:
\begin{equation*}
    [x]_{i}(t+1)=\sum_{j}[A]_{ij}[x]_{j}(t)+\sum_{j}[B]_{ij}[u]_{j}(t)+[w]_{i}(t).
\end{equation*}
The interconnection topology of the system can be modeled as a time-invariant unweighted directed graph $\mathcal{G}(E,V)$, where each subsystem $i$ is identified with a vertex $v_{i}\in V$ and an edge $(v_i,v_j)\in E$ exists whenever $[A]_{ij}\neq 0$ or $[B]_{ij}\neq 0$.

Inspired by control applications in the large-scale system setting, we will assume that subcontroller $[u]_i$ is only able to directly measure its own subsystem state $[x]_i$, but can receive information with no delay from a small set of neighboring sub-controllers $\Vmaxin(i)\in V$ through some kind of communication network and can send information with no delay to a small set of neighboring sub-controllers $\Vmaxout(i)\in V$ in the same manner. To model this interaction more precisely, assume that every controller $i$ has an internal state $[p_t]_i$ and at every time-step it performs the following two operations in order:
\begin{enumerate}\label{const:com}
    \item compute message $[m_t]_i$ based on history of internal state and own measurement $[x_t]_i$ 
    \begin{align} \label{eq:step1}
    [m_t]_{i} = [h_t]_i([x_t]_i, [p_t]_{i},[p_{t-1}]_{i},\dots), 
    \end{align}
    \item exchange messages with neighbors and update internal state:
    \begin{align} \label{eq:step2}
    [p_t]_{i} = [m_t]_{\Vmaxin(i)},
    \end{align}    
    \item compute control action based on own measurement $[x_t]_i$ and history of internal state $[p_t]_{i}$:
    \begin{align} \label{eq:step3}
    [u_t]_{i} = [k_t]_i([x_t]_i, [p_t]_{i},[p_{t-1}]_{i},\dots).
    \end{align}
\end{enumerate}

In real applications communication networks rarely offer a continuously reliable connection between subsystems. Thus, we will assume that some communication links in the network can occasionally drop packets. We model this by defining the constraint that subsystem $i$ at time $t$ can receive information only from the subset $\Vin_t(i) \subset \Vmaxin(i)$ and can only send information to subset $\Vout_t(i) \subset \Vmaxout(i)$. 
We define $\Dout_t(i) := \Vmaxout(i)/ \Vout_t(i)$ as the subset of systems which dropped packages from system $i$, and assume that for each $i$ there is a small subset of neighbors  $\Vminin(i)\subset\Vin_t(i) $ and $\Vminout(i)\subset\Vout_t(i) $ that guarantee communication without dropouts.

If we consider dropouts, then the controller implementation gets perturbed by replacing update step \eqref{eq:step2} with 
\begin{equation*} 
    [p_t]_{i} = [m_t]_{\Vin_t(i)},
    \tag{3'}
\end{equation*}   
which simply states that the internal state update step does not get all messages of its neighbors.

We will model the dropouts occurring for every subsystem $i$ as a random process, in particular we will assume $\Vin_t(i)$ to be independent random variables over $t$, that are identically distributed according to some probability mass function 
\begin{align}
\label{eq:pmf}
f_i: 2^{\Vmaxin(i) / \Vminin(i)} \rightarrow [0,1].
\end{align}
\begin{remark}
$2^{\Vmaxin(i) / \Vminin(i)}$  means the power set of all possible subsystems $\Vmaxin(i) / \Vminin(i)$ that could drop communication to subsystem $i$.
\end{remark}
Furthermore, assume the random sets $\Vin_t(i)$ are also independent for different $i$. We will refer to the joint-probability mass function as $f$ and we will abbreviate $$\Vin_t : =\left\{\Vin_t(1),\Vin_t(2),\dots,\Vin_t(N) \right\}$$
as the collection of subsets of $\{\Vin_t(i)\}$ at time $t$.\\
\vspace{1mm}
With these definitions at hand, we can formulate our goals as to minimize the expected average quadratic cost over the gaussian disturbance $w$ and the distribution of the dropouts occurring, i.e.,
\begin{equation}\label{eq: prob st}
    \begin{aligned}
    &\underset{\{h_t,k_t\}}{\text{min}} & &\mathbb{E}_{\Vin_t \sim f}\underset{T\rightarrow\infty}{\text{lim}}\frac{1}{T}\sum_{t=0}^{T} \mathbb{E}[x_t^{\mathsf{T}}Qx_t+u_t^{\mathsf{T}}Ru_t]\\
    & \mathrm{s.t.} &  &\begin{aligned}
              &x_{t+1}=Ax_t+Bu_t+w_t\ \ \forall t=0,1,...,\\
              & \eqref{eq:step1}, \eqref{eq:step3},(3'),
              \end{aligned}
    \end{aligned}
\end{equation}

where $\mathbb{E}$ is the mathematical expectation and $(Q,R)$ are positive definite cost matrices.

In this work, we propose two ways to tackle this problem, both formulated in the SLS framework since it provides a tractable way to deal with the communication constraint \eqref{eq:step2}. The first strategy is to the tackle problem with an offline controller. To do this, we will synthesize a controller offline inherently robust to a specified set of communication dropout patterns. This will penalize performance but will guarantee stability, as we will show in section 4. We also present a different approach where different controllers for different information exchange topologies are synthesized offline, which then are implemented by an online controller that is senses dropouts and changes the communication strategy while guaranteeing stability. Performance of these two strategies is compared through simulation.

\section{Preliminaries: System Level Synthesis}
\vspace{-3mm}

In this section we present an abridged version of the SLS framework (see \citep{anderson_system_2019,matni_scalable_2017,wang_system_nodate} and references therein). The SLS framework will be used in the derivations presented in this paper due to its ability to impose locality constraints, as well as its ability to distribute both controller synthesis and implementation. Recent work has extended the SLS approach to the time-varying system \citep{anderson_system_2019,ACCho2019} and even general nonlinear systems \citep{ho2020system}. 

\subsection{Time domain System Level Synthesis}
\vspace{-3mm}
Consider the system with dynamics \eqref{eq: dynamics}. Let $u_t$ be a causal time-varying state-feedback control law, i.e., $u_t=K_t(x_0,x_1,...,x_t)$ where $K_t$ is some linear map. We will denote $\bb{K}$ the general bounded causal linear operator mapping state sequences $\bb{x}$ to input sequences $\bb{u} = \bb{K}\bb{x}$. Although rather informal, for ease of exposition we will think of linear bounded operators as infinite dimensional lower triangular matrices. To this end, define $\bb{A}:=\mathrm{blkdiag}(A,A,\dots)$, $\bb{B}:=\mathrm{blkdiag}(B,B,\dots)$ as infinite dimensional block-diagonal matrices with $A$ and $B$ on the diagonal and let $Z$ be the block-downshift operator\footnote{A matrix with identity matrices along its first block sub-diagonal and zeros elsewhere.}. Using this notation, the dynamics \eqref{eq: dynamics} impose the following relationship between the state $\bb{x}$, input $\bb{u}$ and disturbance $\bb{w}$:
\begin{equation*} 
\begin{split}
\mathbf{x} = Z(\bb{A}+\bb{B}\mathbf{K})\mathbf{x+\mathbf{w}}.
\end{split}
\end{equation*}
Overall, the closed loop behavior of the system in (\ref{eq: dynamics}) under the feedback law $\mathbf{K}$ can be entirely characterized by
\begin{subequations}
\begin{align}
\mathbf{x} & = (I-Z(\bb{A}+\bb{B}\mathbf{K}))^{-1}\mathbf{w}=:\mathbf{\Phi}_{x}\mathbf{w}\\
\mathbf{u} & = \mathbf{K}(I-Z(\bb{A}+\bb{B}\mathbf{K}))^{-1}\mathbf{w}:\mathbf{\Phi}_{u}\mathbf{w},
\end{align}
\end{subequations} \label{eq: closed loop A&B}
where the operators $\mathbf{\Phi}_{x}$ and $\mathbf{\Phi}_{u}$ are the closed loop maps from the disturbance $\mathbf{w}$ to the state $\mathbf{x}$ and control input $\mathbf{u}$, respectively.

The SLS framework relies of this parametrization for the optimal controller synthesis to be performed by directly optimizing over the system responses $\mathbf{\Phi}_{x}$ and $\mathbf{\Phi}_{u}$, as opposed to the controller map $\mathbf{K}$ itself. 
\\
\begin{theorem}{\label{thm: SLS}}
For the dynamics (\ref{eq: dynamics}) state feedback law $\mathbf{K}$ defining the control action as $\mathbf{u} = \mathbf{K}\mathbf{x}$, the following are true
\begin{enumerate}
    \item the affine subspace defined by 
    \begin{equation}\label{eq: constraint}
        \left[I-ZA\ \ -ZB\right]\left[\begin{array}{c}\mathbf{\Phi}_{x}\\\mathbf{\Phi}_{u}\end{array}\right] = I
    \end{equation}
    parameterizes all possible system responses.
    
    \item for any transfer matrices $\left\{\mathbf{\Phi}_{x},\mathbf{\Phi}_{u}\right\}$ satisfying (\ref{eq: constraint}), the controller $\mathbf{K} = \mathbf{\Phi}_{u}\mathbf{\Phi}_{x}^{-1}$ achieves the desired response.
\end{enumerate}
\end{theorem}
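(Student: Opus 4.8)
The plan is to prove the two parts of \thmref{thm: SLS} in sequence, using the closed-loop identities derived just above the theorem statement, namely $\mathbf{\Phi}_x = (I-Z(\bb{A}+\bb{B}\mathbf{K}))^{-1}$ and $\mathbf{\Phi}_u = \mathbf{K}(I-Z(\bb{A}+\bb{B}\mathbf{K}))^{-1}$. For part (1), I would first establish \emph{necessity}: given any causal $\mathbf{K}$, the induced responses $\{\mathbf{\Phi}_x,\mathbf{\Phi}_u\}$ satisfy \eqref{eq: constraint}. This is a direct computation — left-multiply $\mathbf{\Phi}_x$ by $(I-ZA)$ and $\mathbf{\Phi}_u$ by $(-ZB)$, add, and use $\mathbf{\Phi}_u = \mathbf{K}\mathbf{\Phi}_x$ together with the defining relation $(I-Z(\bb{A}+\bb{B}\mathbf{K}))\mathbf{\Phi}_x = I$; the $\mathbf{K}$-dependent terms telescope and one is left with $I$. (I should be a little careful about whether $\bb A,\bb B$ or $A,B$ appear — the block-diagonal operators $\bb A,\bb B$ are what multiply the signals, and $I-ZA$ in \eqref{eq: constraint} should be read in that same blockwise sense.)

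Next, for \emph{sufficiency} within part (1), I would argue that every pair satisfying the affine constraint \eqref{eq: constraint} actually arises from some causal $\mathbf{K}$: given such a pair, one checks that $\mathbf{\Phi}_x$ is invertible — because \eqref{eq: constraint} forces $\mathbf{\Phi}_x = I + ZA\mathbf{\Phi}_x + ZB\mathbf{\Phi}_u$, so $\mathbf{\Phi}_x$ equals $I$ plus a strictly-lower-triangular (due to the block-downshift $Z$) operator, hence is invertible with causal inverse — and then sets $\mathbf{K} := \mathbf{\Phi}_u\mathbf{\Phi}_x^{-1}$. One then verifies that this $\mathbf{K}$ reproduces exactly $\{\mathbf{\Phi}_x,\mathbf{\Phi}_u\}$ as its closed-loop maps, which is essentially the content of part (2). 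So I would prove part (2) as a lemma and invoke it here.

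For part (2), starting from a constraint-satisfying pair, define $\mathbf{K}=\mathbf{\Phi}_u\mathbf{\Phi}_x^{-1}$ and compute the closed loop it induces: substitute into $(I - Z(\bb A + \bb B \mathbf{K}))$ and show this equals $\mathbf{\Phi}_x^{-1}$. Concretely, rearranging \eqref{eq: constraint} gives $(I-ZA)\mathbf{\Phi}_x - ZB\mathbf{\Phi}_u = I$; right-multiply by $\mathbf{\Phi}_x^{-1}$ to get $(I-ZA) - ZB\mathbf{K} = \mathbf{\Phi}_x^{-1}$, i.e. $I - Z(\bb A + \bb B\mathbf{K}) = \mathbf{\Phi}_x^{-1}$. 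Inverting yields closed-loop state map $\mathbf{\Phi}_x$, and pre-multiplying by $\mathbf{K}$ yields $\mathbf{K}\mathbf{\Phi}_x = \mathbf{\Phi}_u\mathbf{\Phi}_x^{-1}\mathbf{\Phi}_x = \mathbf{\Phi}_u$, the claimed control map. One should also remark that causality/locality of $\mathbf{K}$ is inherited because $\mathbf{\Phi}_x^{-1}$ is causal (lower-triangular) as noted above.

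The main obstacle is not any single computation — each step is a one-line manipulation — but rather being careful about the infinite-dimensional operator setting: justifying that $\mathbf{\Phi}_x$ is boundedly invertible, that $Z$ being a strict block-downshift makes $I + (\text{strictly lower-triangular})$ invertible, and that all the formal inverses and products are well-defined causal operators. Since the excerpt explicitly says we may "think of linear bounded operators as infinite dimensional lower triangular matrices," I would lean on that convention and keep the invertibility argument at the level of "identity plus strictly-lower-triangular is invertible with lower-triangular inverse," flagging that a fully rigorous treatment lives in the cited references \citep{anderson_system_2019}. I would also note the block-structure bookkeeping ($A$ vs.\ $\bb A$, $Z$ acting blockwise) as the place where a careless reader could go astray.
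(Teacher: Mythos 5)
The paper itself states \thmref{thm: SLS} as a background result and gives no proof, deferring to \citep{anderson_system_2019}; your argument is precisely the standard proof given there, and it is correct: necessity by the one-line computation $(I-Z\bb{A})\mathbf{\Phi}_x - Z\bb{B}\mathbf{K}\mathbf{\Phi}_x = (I-Z(\bb{A}+\bb{B}\mathbf{K}))\mathbf{\Phi}_x = I$, and sufficiency by noting that \eqref{eq: constraint} forces $\mathbf{\Phi}_x = I + (\text{strictly block-lower-triangular})$, hence causally invertible, so $\mathbf{K}=\mathbf{\Phi}_u\mathbf{\Phi}_x^{-1}$ satisfies $I - Z(\bb{A}+\bb{B}\mathbf{K}) = \mathbf{\Phi}_x^{-1}$ and reproduces the prescribed pair. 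Your caveats (reading $A,B$ blockwise as $\bb{A},\bb{B}$, and leaning on the lower-triangular-operator convention for boundedness/invertibility rather than a fully rigorous operator-theoretic treatment) are exactly the points the cited reference handles, so there is no gap relative to the statement as given.
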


Hence, by using the SLS framework optimal control problems can be reformulated into convex optimization problems. A detailed description on how to do this is provided in \citep{anderson_system_2019}. One of the main advantages of formulating an optimal control problem with the system response parametrization is that it allows to impose \eqref{eq:step1}, \eqref{eq:step2} and \eqref{eq:step3} in a convex manner by imposing the system responses to be localized. As shown in \citep{anderson_system_2019}, communication constraints on controllers of the form $\bb{K} = \PhiU\PhiX^{-1}$ can be easily incorporated into optimal control problems, by requiring that the closed loop maps $\left\{\mathbf{\Phi}_{x},\mathbf{\Phi}_{u}\right\}$ lie in a suitably chosen subspace $\mathcal{S}$. Overall, many traditionally non-convex optimal control problems can be recast into their equivalent convex SLS representation of the form
\begin{equation}
\begin{aligned}\label{eq:slso}
& \underset{\mathbf{\Phi}_{x},\mathbf{\Phi}_{u}}{\text{min}} &  &g(\mathbf{\Phi}_{x},\mathbf{\Phi}_{u})\\
& \ \text{s.t.} &  &\begin{aligned} 
    &\left[I-ZA\ \ -ZB\right]\left[\begin{array}{c}\mathbf{\Phi}_{x}\\\mathbf{\Phi}_{u}\end{array}\right] = I \\
    &\left\{\mathbf{\Phi}_{x},\mathbf{\Phi}_{u}\right\}\in\mathcal{S},
\end{aligned}
\end{aligned}
\end{equation}
where the optimization is phrased over the feasible closed loop maps $\left\{\mathbf{\Phi}_{x},\mathbf{\Phi}_{u}\right\}$. Moreover, $\mathcal{S}$ can incorporate other convex constraints imposed on the system responses, i.e., finite impulse response (FIR), performance bounds, etc. 

\subsection{Virtually localizable System Level Synthesis}
\vspace{-3mm}
In the previous subsection the SLS framework was presented. Oftentimes the constraint $\{\PhiX,\PhiU\} \in \mathcal{S}$ can be too restrictive to impose on the closed loop maps, while it is only used for certifying that certain communication constraints on the controller $\mathbf{K} = \PhiU\PhiX^{-1}$ are enforced. A robust-variant of the SLS parametrization described above was introduced by \citep{matni_scalable_2017}, and later generalized in \citep{ACCho2019} for the general setting of time-varying systems, which addresses this issue. The following general approach can be found in \citep{anderson_system_2019} and is based on the following result:
\\
\begin{theorem}\label{thm:robstab}
Let $(\mathbf{\tilde{\Phi}}_{x}, \mathbf{\tilde{\Phi}}_{u}, \mathbf{\Delta})$ be a solution to
    \begin{equation*}
        \left[I-ZA\ \ -ZB\right]\left[\begin{array}{c}\mathbf{\tilde{\Phi}}_{x}\\\mathbf{\tilde{\Phi}}_{u}\end{array}\right] = (I+\mathbf{\Delta}).
    \end{equation*}
    
Then, the controller implementation 
\begin{equation}{\label{eq: robust implementation}}
\begin{aligned}
        \mathbf{u}=\mathbf{\tilde{\Phi}}_{u}\mathbf{\hat{w}},\ \ \  \mathbf{\hat{x}}=(I - \mathbf{\tilde{\Phi}}_{x})\mathbf{\hat{w}},\ \ \ 
        \mathbf{\hat{w}}=\mathbf{x}-\mathbf{\hat{x}},
\end{aligned}
\end{equation}

internally stabilizes the system \eqref{eq: dynamics} if and only if $(I+\mathbf{\Delta)^{-1}}$ is stable. Furthermore, the actual system responses achieved are given by:
\begin{equation}\label{eq: system response}
    \left[\begin{array}{c}\mathbf{x}\\\mathbf{u}\end{array}\right] = \left[\begin{array}{c}\tilde{\mathbf{\Phi}}_{x}\\\tilde{\mathbf{\Phi}}_{u}\end{array}\right](I+\mathbf{\Delta})^{-1} \mathbf{w}.
\end{equation}

\label{thm: robust SLS}
\end{theorem}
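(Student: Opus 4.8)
The plan is to treat this as an elementary interconnection computation, in the spirit of the robust SLS arguments in \citep{matni_scalable_2017,anderson_system_2019}. Before anything else I would record the well-posedness facts needed for the statement to even make sense: taking $\mathbf{\Delta}$ strictly causal (equivalently, $\mathbf{\tilde{\Phi}}_x$ with identity leading term, as is standard), the estimator loop $\mathbf{\hat x}=(I-\mathbf{\tilde{\Phi}}_x)\mathbf{\hat w}$, $\mathbf{\hat w}=\mathbf{x}-\mathbf{\hat x}$ in \eqref{eq: robust implementation} contains no algebraic loop, so it defines a causal controller, and $(I+\mathbf{\Delta})^{-1}$ always exists as a causal operator, so ``$(I+\mathbf{\Delta})^{-1}$ is stable'' refers only to boundedness. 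I would also use throughout that $\mathbf{\tilde{\Phi}}_x,\mathbf{\tilde{\Phi}}_u$ are fixed stable maps, so pre- or post-composition with them preserves stability.

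Next I would solve the closed loop driven only by $\mathbf{w}$. Eliminating $\mathbf{\hat x}$ from \eqref{eq: robust implementation} gives $\mathbf{x}=\mathbf{\tilde{\Phi}}_x\mathbf{\hat w}$ and $\mathbf{u}=\mathbf{\tilde{\Phi}}_u\mathbf{\hat w}$; substituting $\mathbf{u}$ into the plant relation $(I-ZA)\mathbf{x}-ZB\mathbf{u}=\mathbf{w}$ coming from \eqref{eq: dynamics}, and then replacing $\mathbf{x}$ by $\mathbf{\tilde{\Phi}}_x\mathbf{\hat w}$, yields $\big((I-ZA)\mathbf{\tilde{\Phi}}_x-ZB\mathbf{\tilde{\Phi}}_u\big)\mathbf{\hat w}=\mathbf{w}$, i.e.\ $(I+\mathbf{\Delta})\mathbf{\hat w}=\mathbf{w}$ by the hypothesis on $(\mathbf{\tilde{\Phi}}_x,\mathbf{\tilde{\Phi}}_u,\mathbf{\Delta})$. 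Hence $\mathbf{\hat w}=(I+\mathbf{\Delta})^{-1}\mathbf{w}$, and so $\mathbf{x}=\mathbf{\tilde{\Phi}}_x(I+\mathbf{\Delta})^{-1}\mathbf{w}$ and $\mathbf{u}=\mathbf{\tilde{\Phi}}_u(I+\mathbf{\Delta})^{-1}\mathbf{w}$, which is precisely \eqref{eq: system response} and settles the ``furthermore'' claim.

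For the ``if and only if'' I would rerun the same elimination with independent test perturbations injected at the plant state, on the actuation channel, and at the internal state $\mathbf{\hat x}$. The computation then gives $\mathbf{\hat w}=(I+\mathbf{\Delta})^{-1}M(\mathbf{d}_1,\mathbf{d}_2,\mathbf{d}_3)$, where $M$ is assembled from $I-ZA$ and $ZB$ (both stable), and $\mathbf{x},\mathbf{u},\mathbf{\hat x}$ are recovered from $\mathbf{\hat w}$ and the $\mathbf{d}_i$ through $\mathbf{\tilde{\Phi}}_x,\mathbf{\tilde{\Phi}}_u$. Thus every transfer map of the interconnection is stable as soon as $(I+\mathbf{\Delta})^{-1}$ is, which gives the ``if'' direction. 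Conversely, taking only the plant-state perturbation nonzero exhibits $(I+\mathbf{\Delta})^{-1}$ itself as the closed-loop map to the internal signal $\mathbf{\hat w}$, so internal stability forces $(I+\mathbf{\Delta})^{-1}$ to be stable, giving ``only if''.

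The algebra is routine; the two places that genuinely need care are the well-posedness and causal-invertibility preamble (so that the stability condition is a meaningful statement), and the bookkeeping in the internal-stability step — being explicit about where test signals enter and checking that no transfer map other than $(I+\mathbf{\Delta})^{-1}$ can be a source of instability, which is exactly where the standing assumption that $\mathbf{\tilde{\Phi}}_x,\mathbf{\tilde{\Phi}}_u$ are stable gets used. This mirrors the proof of the robust stability result in \citep{anderson_system_2019}, to which the argument can ultimately be referred.
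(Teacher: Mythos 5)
Your proposal is correct, and since the paper states Theorem \ref{thm:robstab} without proof (deferring to \citep{matni_scalable_2017,anderson_system_2019}), there is nothing to compare it against except those references — and your argument is exactly theirs: eliminate $\hat{\mathbf{x}}$ to obtain $(I+\mathbf{\Delta})\hat{\mathbf{w}}=\mathbf{w}$, which gives \eqref{eq: system response}, and then inject perturbations on the state, actuation and internal channels to exhibit every closed-loop map as a stable operator composed with $(I+\mathbf{\Delta})^{-1}$, with $(I+\mathbf{\Delta})^{-1}$ itself appearing as the map from the state disturbance to $\hat{\mathbf{w}}$, yielding both directions of the equivalence. Your preamble is also the right emphasis: strict causality of $\mathbf{\Delta}$ (well-posedness, causal invertibility of $I+\mathbf{\Delta}$) and stability of $\tilde{\mathbf{\Phi}}_x,\tilde{\mathbf{\Phi}}_u$ are hypotheses the statement leaves implicit but the argument genuinely needs. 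One small remark: as printed, \eqref{eq: robust implementation} with $\hat{\mathbf{x}}=(I-\tilde{\mathbf{\Phi}}_x)\hat{\mathbf{w}}$ and $\hat{\mathbf{w}}=\mathbf{x}-\hat{\mathbf{x}}$ would give $\mathbf{x}=(2I-\tilde{\mathbf{\Phi}}_x)\hat{\mathbf{w}}$; your step $\mathbf{x}=\tilde{\mathbf{\Phi}}_x\hat{\mathbf{w}}$ silently uses the intended convention $\hat{\mathbf{x}}=(\tilde{\mathbf{\Phi}}_x-I)\hat{\mathbf{w}}$, which is the one consistent with the distributed implementation \eqref{eq:implementation}, so this is a sign typo in the statement rather than a gap in your proof.
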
{}

This theorem allows for a reformulation of problem \eqref{eq:slso}, which allows to search over controller implementations $\bb{K} = \tilde{\mathbf{\Phi}}_{u}\tilde{\mathbf{\Phi}}_{x}^{-1}$ where $\mathbf{\tilde{\Phi}}_{x}, \mathbf{\tilde{\Phi}}_{u}$ do not have to be necessarily closed loop maps. As stated in \citep{anderson_system_2019}, a sufficient condition for $(I-\mathbf{\Delta})^{-1}$ to be stable is that any of the induced norms $\left\Vert\mathbf{\Delta}\right\Vert_{\mathcal{H}_{\infty}},\left\Vert\mathbf{\Delta}\right\Vert_{\mathcal{L}_{1}},\left\Vert\mathbf{\Delta}\right\Vert_{\varepsilon_{1}}$ are smaller than $<1$. In particular, the optimal control problem \eqref{eq:slso} can be relaxed to:
\\
\begin{equation}\label{eq: robust SLS}
\begin{aligned}
& \underset{\tilde{\mathbf{\Phi}}_{x}, \tilde{\mathbf{\Phi}}_{u}, \mathbf{\Delta}}{\text{min}} &  &g(\left[\begin{array}{c}\tilde{\mathbf{\Phi}}_{x}\\\tilde{\mathbf{\Phi}}_{u}\end{array}\right](I+\mathbf{\Delta})^{-1})\\
& \ \text{s.t.} &  &\begin{aligned} 
    &\left[I-ZA\ \ -ZB\right]\left[\begin{array}{c}\tilde{\mathbf{\Phi}}_{x}\\\tilde{\mathbf{\Phi}}_{u}\end{array}\right] = I+\mathbf{\Delta} \\
    &\left\{\tilde{\mathbf{\Phi}}_{x},\tilde{\mathbf{\Phi}}_{u}\right\}\in\mathcal{L}_d\cup\mathcal{S},\ \ \left\Vert{\mathbf{\Delta}}\right\Vert_{*}<1,
\end{aligned}
\end{aligned}
\end{equation}

where $*$ in $\left\Vert\cdot\right\Vert_{*}$ is one of $\mathcal{H}_{\infty}$, $\mathcal{L}_{1}$, $\varepsilon_{1}$.  
\section{Controller Structure and Distributed Synthesis}
\vspace{-3mm}
Here we introduce the reformulation of problem \eqref{eq: prob st} into the SLS framework, and illustrate the controller structure and its implementation and the communication model. This will provide us with a set of tools that will be useful in the next sections.


In what follows, we use the notation $\PhiXtk{t}{k}$, $\PhiUtk{t}{k}$ and $\PhiXtkij{t}{k}{i}{j}$, $\PhiUtkij{t}{k}{i}{j}$ to index sub-matrices of $\PhiX$ and $\PhiU$
\begin{align}\label{eq:notat1}
   &\PhiX = \begin{bmatrix}\PhiXtk{0}{1}&0&0&\dots \\
   \PhiXtk{0}{2}&\PhiXtk{1}{1}&0&\dots\\
   \PhiXtk{0}{3}&\PhiXtk{1}{2}&\PhiXtk{2}{1}&\dots\\
    \vdots&\vdots&\vdots&\ddots
   \end{bmatrix}
\end{align}
\begin{align}\label{eq:notat2}
   &\PhiXtk{t}{k}= \begin{bmatrix}\PhiXtkij{t}{k}{1}{1}&\PhiXtkij{t}{k}{1}{2}&\dots&\PhiXtkij{t}{k}{1}{n} \\
   \PhiXtkij{t}{k}{2}{1}&\PhiXtkij{t}{k}{2}{2}&\dots&\PhiXtkij{t}{k}{2}{n} \\
    \vdots&\vdots&\ddots&\vdots\\
    \PhiXtkij{t}{k}{n}{1}&\PhiXtkij{t}{k}{n}{2}&\dots&\PhiXtkij{t}{k}{n}{n}
   \end{bmatrix}
\end{align}
and the same convention is understood for $\PhiU$ and $\bb{\Delta}$. Notice that we are abusing notation, and although we are referring to the system responses in \eqref{eq: system response}, we will drop the tilde for convenience. Also, we will use the abbreviation for the columns
\begin{align}\label{eq:notat3}
    &[\phi^{t}_x]_{i} := \PhiUtkij{t}{:}{:}{i} &[\phi^{t}_u]_{i} := \PhiXtkij{t}{:}{:}{i}.
\end{align}
With this notation, we can write the controller implementation \eqref{eq: robust implementation} into its distributed form:
\begin{subequations}
\label{eq:implementation}
\begin{align}
    [\hat{w}_t]_j &= [x_t]_j-\sum\limits_{i}\sum\limits^{t+1}_{k=2}\PhiXtkij{t+1-k}{k}{j}{i}[\hat{w}_{t+1-k}]_i \\
    [u_t]_j &= \sum\limits_{i}\sum\limits^{t+1}_{k=1}\PhiUtkij{t+1-k}{k}{j}{i}[\hat{w}_{t+1-k}]_i. 
\end{align}
\end{subequations}
The above controller implementation can be put in the form of the communication model described in \eqref{eq:step1}, with the following correspondence \eqref{eq:step2} in the SLS notation:
\begin{align}
    [m_t]_i &= \{[\phi^{t}_x]_{i}[\hat{w}_{t}]_i,[\phi^{t}_u]_{i}[\hat{w}_{t}]_i\}\label{eq:step1SLS} \\
    [p_t]_i &= \{[\phi^{t}_x]_{j}[\hat{w}_{t}]_j,[\phi^{t}_u]_{j}[\hat{w}_{t}]_j\}^{N}_{j=1}\label{eq:step2SLS}.
\end{align}
\begin{remark}
The above communication scheme expresses that every subsystem $i$ decides on the columns $[\phi^{t}_x]_{i}$, $[\phi^{t}_u]_{i}$ and it shares it with all other subsystems and it can be verified that this is consistent with the original controller equations \eqref{eq:implementation}.
\end{remark}
Recalling our original problem \eqref{eq: prob st}, notice that we can equivalently formulate the dropout constraint 
$$[p_t]_{i} = [m_t]_{\Vin_t(i)}$$ as the following sparsity constraint on $[\phi^{t}_x]_{j}$ and $[\phi^{t}_u]_{j}$:
\begin{subequations}
\label{eq:sparsity}
\begin{align*}
    [[\phi^{t}_x]_{i}]_{\Dout_t(i)} = 0 &\Leftrightarrow [\phi^{t}_x]_{i} = [[\phi^{t}_x]_{i}]_{\Vout_t(i)},\\
    [[\phi^{t}_u]_{i}]_{\Dout_t(i)} = 0 &\Leftrightarrow [\phi^{t}_u]_{i} = [[\phi^{t}_u]_{i}]_{\Vout_t(i)},
\end{align*}
\end{subequations}
where $[[\phi^{t}_x]_{j}]_{\mathcal{S}}$ stands for 
\begin{equation*}
    [[\phi^{t}_x]_{i}]_{\mathcal{S}} := \PhiXtkij{t}{:}{\mathcal{S}}{i},\quad [[\phi^{t}_u]_{i}]_{\mathcal{S}} := \PhiUtkij{t}{:}{\mathcal{S}}{i}.
\end{equation*}

With this in mind, together with equation \eqref{eq: system response}, we can reformulate optimization \eqref{eq: prob st} into a robust SLS problem\footnote{Notice that, as shown in \citep{wang_localized_2014}, the localized LQR cost in \eqref{eq: prob st} in the presence of additive-white Gaussian noise can be reformulated into the $\mathcal{H}_2$ norm of the weighted transfer matrices $\mathbf{{\Phi}}_x$ and $\mathbf{{\Phi}}_u$} as: 
\begin{equation}\label{eq: prob st SLS}
    \begin{aligned}
    &\underset{\mathbf{\Phi}_{x}, \mathbf{\Phi}_{u}, \mathbf{\Delta}}{\text{min}} & &\mathbb{E}_{\Vin_t \sim f}\left\Vert\begin{bmatrix}\mathbf{Q} & \mathbf{R}\end{bmatrix} \begin{bmatrix}\mathbf{\Phi}_{x}\\\mathbf{\Phi}_{u}\end{bmatrix}(I+\mathbf{\Delta})^{-1}\right\Vert_{\mathcal{H}_2}\\
    & \text{s.t.} &  &\begin{aligned}
              &Z^T_{AB}\left[\begin{array}{c}\mathbf{\Phi}_{x}\\\mathbf{\Phi}_{u}\end{array}\right] = \mathbf{\Delta},\ \ \left\Vert{\mathbf{\Delta}}\right\Vert_{*}<1,\\
              &[\phi^{t}_x]_{i} = [[\phi^{t}_x]_{i}]_{\Vout_t(i)},\ [\phi^{t}_u]_{i} = [[\phi^{t}_u]_{i}]_{\Vout_t(i)}\\
              & \forall\ t=0,1,\ldots
              \end{aligned}
    \end{aligned}
\end{equation}
where $Z^T_{AB}=\left[Z^T-A,\ \ -B\right]$, $\bb{Q}:=\mathrm{blkdiag}(Q^{\frac{1}{2}},Q^{\frac{1}{2}},\dots)$, $\bb{R}:=\mathrm{blkdiag}(R^{\frac{1}{2}},R^{\frac{1}{2}},\dots)$, and $*$ in $\left\Vert\cdot\right\Vert_{*}$ is one of $\mathcal{H}_{\infty}$, $\mathcal{L}_{1}$, $\varepsilon_{1}$. Notice that the last constraint is using the notation $\Vout_k(i)$, which accounts for the presence of dropouts for each $k<t\ \forall t$.
\vspace{-1mm}
\subsection{Relaxation of $\mathcal{H}_2$ Optimal Robust controller synthesis}
\vspace{-2mm}
Equation \eqref{eq: prob st SLS} presents a reformulation of the original problem \eqref{eq: prob st} into one featuring the SLS formulation. This reformulation will allow to make this problem tractable given the transparency with which communication structure constraints can be imposed in SLS. However, one of the major limitations of \eqref{eq: prob st SLS} is that the objective function is non-convex in $\mathbf{\Delta}$. Not only that, but it is not obvious how to perform the computation distributedly among subcontrollers. In what follows we present a way to solve a relaxation of \eqref{eq: prob st SLS} by means of a quasi-distributed convex optimization. In the following derivation we choose the $\mathcal{L}_1$ norm as the robustness criterion for $\left\Vert\cdot\right\Vert_{*}$ and give an appropriate bound on the $\mathcal{H}_2$ norm.

We start by recalling a result on Neumann series of bounded linear operators from \lemref{lem:Neuma} (Appendix A). It can be leveraged to obtain the following $\mathcal{H}_2$ inequality for linear time-invariant operators in terms of the $\| . \|_{2\leftarrow 1}$-norm as in \eqref{lem:G21}.
\\
\begin{lemma}\label{lem:G21}
Let $\bb{\Phi}$ be a time-invariant linear operator with $\Phitkij{t}{k}{i}{j}$ and $[\phi^{t}]_{i}$ referring to the decompositions in \eqref{eq:notat1}, \eqref{eq:notat2}, \eqref{eq:notat3}. If $\|\bb{\Phi}\|_{\mathcal{H}_2} < \infty$, then 
\begin{align}
    \| \bb{\Phi}\|_{2 \leftarrow 1} &: = \sup\limits_{\|\bb{w}\|_1 \leq 1} \left\|\bb{\Phi}\bb{w}\right\|_2 = \max_{i,t} \sqrt{\|[\phi^{t}]_{i}\|^2_{F}}.
    \label{eq: G21}
\end{align}
\end{lemma}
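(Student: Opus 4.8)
The plan is to establish \eqref{eq: G21} by a direct two-sided estimate, reading off the columns of $\bb{\Phi}$ from the block decomposition \eqref{eq:notat1}--\eqref{eq:notat3}. First I would fix a labelling of the scalar components of the disturbance signal $\bb{w}$ by pairs $(t,i)$, so that $[w_t]_i$ is the entry of $\bb{w}$ injected at subsystem $i$ at time $t$, $\|\bb{w}\|_1=\sum_{t,i}|[w_t]_i|$, and the corresponding standard basis signal $e_{t,i}$ satisfies $\bb{\Phi}e_{t,i}=[\phi^{t}]_{i}$ by the index conventions in \eqref{eq:notat1}--\eqref{eq:notat3}; hence $\|\bb{\Phi}e_{t,i}\|_2=\|[\phi^{t}]_{i}\|_F$. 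The hypothesis $\|\bb{\Phi}\|_{\mathcal{H}_2}<\infty$ enters precisely here: it forces every such column norm to be finite (in fact bounded by $\|\bb{\Phi}\|_{\mathcal{H}_2}$), and time-invariance of $\bb{\Phi}$ (its block-Toeplitz structure) makes the column blocks $[\phi^{t}]_{i}$ take only finitely many distinct values as $t$ varies, so that $M^\star:=\max_{i,t}\|[\phi^{t}]_{i}\|_F$ is genuinely attained, say at $(i^\star,t^\star)$.

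For the upper bound $\|\bb{\Phi}\|_{2\leftarrow 1}\le M^\star$ I would take any $\bb{w}$ with $\|\bb{w}\|_1\le 1$, expand $\bb{w}=\sum_{t,i}[w_t]_i\,e_{t,i}$, observe that the series $\sum_{t,i}[w_t]_i\,[\phi^{t}]_{i}$ converges in $\ell_2$ since $\sum_{t,i}|[w_t]_i|\,\|[\phi^{t}]_{i}\|_F\le M^\star\|\bb{w}\|_1<\infty$ and $\ell_2$ is complete, and then apply the triangle inequality:
\[
\|\bb{\Phi}\bb{w}\|_2=\Big\|\textstyle\sum_{t,i}[w_t]_i\,[\phi^{t}]_{i}\Big\|_2\le\sum_{t,i}|[w_t]_i|\,\|[\phi^{t}]_{i}\|_F\le M^\star\,\|\bb{w}\|_1\le M^\star .
\]
Taking the supremum over $\|\bb{w}\|_1\le 1$ gives the inequality. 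For the matching lower bound I would simply test with $\bb{w}=e_{t^\star,i^\star}$: it has $\|\bb{w}\|_1=1$ and $\|\bb{\Phi}\bb{w}\|_2=\|[\phi^{t^\star}]_{i^\star}\|_F=M^\star$, hence $\|\bb{\Phi}\|_{2\leftarrow 1}\ge M^\star$. Combining the two bounds yields $\|\bb{\Phi}\|_{2\leftarrow 1}=M^\star=\max_{i,t}\sqrt{\|[\phi^{t}]_{i}\|_F^2}$, which is \eqref{eq: G21}.

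The analysis itself is routine; the points I would be most careful about are bookkeeping. The first is verifying that $\bb{\Phi}e_{t,i}$ coincides, entry for entry, with the stacked block $[\phi^{t}]_{i}$ under the conventions \eqref{eq:notat1}--\eqref{eq:notat3}, so that its Euclidean norm is exactly the Frobenius norm on the right-hand side of \eqref{eq: G21}. The second is justifying that the supremum over admissible $\bb{w}$ is attained as a maximum: this is where time-invariance and the finite number of subsystems are used, since without them one would only obtain a supremum over $t$. The Neumann-series estimate of \lemref{lem:Neuma} is not needed for this lemma as such; it will be invoked afterwards, when $\bb{\Phi}$ is specialized to $\left[\mathbf{\Phi}_{x};\mathbf{\Phi}_{u}\right](I+\mathbf{\Delta})^{-1}$, to guarantee that this composite operator is $\mathcal{H}_2$-bounded and hence that \eqref{eq: G21} applies to it.
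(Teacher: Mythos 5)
Your argument is correct and follows essentially the same route as the paper's proof: decompose $\bb{w}$ into dirac (basis) signals, use linearity and the triangle inequality for the upper bound, exploit time-invariance so the column norms do not depend on $t$, and achieve the bound with the maximizing dirac input. Your added care about convergence of the column series and attainment of the maximum is a minor tightening of the same argument, not a different approach.
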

\vspace{-6mm}
\begin{proof}
See Appendix A.
\end{proof}
\begin{lemma}\label{lem:robust relax}
Assume $\bb{G}$, $\bb{\Delta}$ are linear time-invariant operators of dimension $n$ with $\|\bb{\Delta}\|_{\mathcal{L}_1} \leq \lambda<1$ and $\|\bb{G} \|_{\mathcal{H}_2} < \infty$. Then the following bound holds:
\begin{align*}
    \left\|\bb{G}(\bb{I}-\bb{\Delta})^{-1} \right\|_{\mathcal{H}_2} \leq \|\bb{G}\|_{2 \leftarrow 1}\frac{n}{1-\lambda}.
\end{align*}
\end{lemma}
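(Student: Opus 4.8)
The plan is to combine the Neumann series expansion of $(\bb{I}-\bb{\Delta})^{-1}$ with the $\mathcal{H}_2$-to-$\|\cdot\|_{2\leftarrow 1}$ identity from \lemref{lem:G21} and submultiplicativity of the $\mathcal{L}_1$-norm. First I would write $(\bb{I}-\bb{\Delta})^{-1} = \sum_{k=0}^\infty \bb{\Delta}^k$, which converges because $\|\bb{\Delta}\|_{\mathcal{L}_1}\leq\lambda<1$ (this is the content of \lemref{lem:Neuma} in Appendix A). Hence $\bb{G}(\bb{I}-\bb{\Delta})^{-1} = \sum_{k=0}^\infty \bb{G}\bb{\Delta}^k$, and by the triangle inequality for $\|\cdot\|_{\mathcal{H}_2}$,
\begin{align*}
\left\|\bb{G}(\bb{I}-\bb{\Delta})^{-1}\right\|_{\mathcal{H}_2} \leq \sum_{k=0}^\infty \left\|\bb{G}\bb{\Delta}^k\right\|_{\mathcal{H}_2}.
\end{align*}

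Next I would bound each summand. The key inequality I expect to need is $\|\bb{M}\|_{\mathcal{H}_2} \leq n\,\|\bb{M}\|_{2\leftarrow 1}$ for an $n$-dimensional operator $\bb{M}$: indeed $\|\bb{M}\|_{\mathcal{H}_2}$ is the Frobenius-type norm summing over all $n$ unit-impulse inputs $e_j$, while $\|\bb{M}\|_{2\leftarrow 1} = \sup_{\|\bb{w}\|_1\leq 1}\|\bb{M}\bb{w}\|_2 \geq \|\bb{M}e_j\|_2$ for each $j$, so $\|\bb{M}\|_{\mathcal{H}_2}^2 = \sum_{j=1}^n \|\bb{M}e_j\|_2^2 \leq n\,\|\bb{M}\|_{2\leftarrow 1}^2$ (using that for a time-invariant operator the $\mathcal{H}_2$ norm is computed from the response to impulses in each coordinate). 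Applying this with $\bb{M} = \bb{G}\bb{\Delta}^k$ gives $\|\bb{G}\bb{\Delta}^k\|_{\mathcal{H}_2} \leq n\,\|\bb{G}\bb{\Delta}^k\|_{2\leftarrow 1}$. Then I would use the mixed submultiplicativity $\|\bb{G}\bb{\Delta}^k\|_{2\leftarrow 1} \leq \|\bb{G}\|_{2\leftarrow 1}\,\|\bb{\Delta}^k\|_{1\leftarrow 1} \leq \|\bb{G}\|_{2\leftarrow 1}\,\|\bb{\Delta}\|_{\mathcal{L}_1}^k \leq \|\bb{G}\|_{2\leftarrow 1}\,\lambda^k$, where $\|\cdot\|_{1\leftarrow 1}$ is the induced $\ell_1\to\ell_1$ (i.e. $\mathcal{L}_1$) operator norm and I use that it is submultiplicative. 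Summing the geometric series $\sum_{k=0}^\infty \lambda^k = 1/(1-\lambda)$ then yields
\begin{align*}
\left\|\bb{G}(\bb{I}-\bb{\Delta})^{-1}\right\|_{\mathcal{H}_2} \leq \|\bb{G}\|_{2\leftarrow 1}\,\frac{n}{1-\lambda}.
\end{align*}

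The main obstacle I anticipate is making the norm bookkeeping precise in the infinite-dimensional / operator-valued setting: one must be careful that $\|\cdot\|_{\mathcal{L}_1}$ really coincides with (or dominates) the induced $\ell_1\to\ell_1$ operator norm in the block-signal sense used here, that this induced norm is genuinely submultiplicative, and that the $\mathcal{H}_2 \leq n\,\|\cdot\|_{2\leftarrow1}$ step is legitimate — it relies on $\|\cdot\|_{\mathcal{H}_2}$ being the sum over the $n$ standard basis impulse responses, which is exactly the decomposition exploited in \lemref{lem:G21}. A secondary technical point is justifying the interchange of the infinite sum with the norm (triangle inequality extends to the convergent series), but this is routine given the geometric decay $\lambda^k$. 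I would also remark that the constant $n$ is generally loose but suffices for the subsequent tractable relaxation.
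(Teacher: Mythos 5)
Your proposal is correct and follows essentially the same route as the paper: Neumann series expansion via \lemref{lem:Neuma}, a per-term bound of $n\,\|\bb{G}\|_{2\leftarrow 1}\lambda^k$ obtained by decomposing the $\mathcal{H}_2$ norm over the $n$ unit-impulse responses and using the $\ell_1$-induced submultiplicativity of $\|\cdot\|_{\mathcal{L}_1}$, then summing the geometric series. The only cosmetic difference is your intermediate step $\|\bb{M}\|_{\mathcal{H}_2}\le\sqrt{n}\,\|\bb{M}\|_{2\leftarrow 1}$, which in fact yields the slightly tighter constant $\sqrt{n}$ before you relax it to $n$ to match the stated bound.
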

\begin{proof}
    Using \lemref{lem:Neuma} we have $\bb{G}(\bb{I}-\bb{\Delta})^{-1} = \sum\limits^{\infty}_{k=0} \bb{G}\bb{\Delta}^{k},$ where $\bb{\Delta}^{k} = \underbrace{\bb{\Delta} \circ \bb{\Delta} \circ \dots \circ \bb{\Delta} }_{k}$. First consider the $\mathcal{H}_2$ norm of the terms $\bb{G}\bb{\Delta}^{k}$. We can write
        $\|\bb{G}\bb{\Delta}^{k}\|^{2}_{\mathcal{H}_2} = \sum\limits^{n}_{i=1}  \|\bb{G}\bb{\Delta}^{k} \bb{s}^{(i)}\|^{2}_2$
    where $\bb{s}^{(i)}$ is the sequence where $s^{(i)}_0 = e_i $ \footnote{$e_i$ is the unit vector in the $i^{th}$ entry.} and $s^{(i)}_k = 0$ for all $k>0$. Since, $\|\bb{s}^{(i)}\|_1 =1$ and $\|\bb{\Delta}^{k}\|_{\mathcal{L}_1} \leq \|\bb{\Delta}\|^{k}_{\mathcal{L}_1} \leq \lambda^k$, we have $\|\bb{\Delta}^{k} \bb{s}^{(i)}\|_{1} \leq \lambda^k$. This gives us the bound:
    \begin{align*}
        \|\bb{G}\bb{\Delta}^{k}\|_{\mathcal{H}_2} \leq \sum\limits^{n}_{i=1}  \|\bb{G}\bb{\Delta}^{k} \bb{s}^{(i)}\|_2 \leq  n \|\bb{G}\|_{2 \leftarrow 1} \lambda^k
    \end{align*}
    and leads to the desired result:
    \begin{align*}
         \|\sum\limits^{\infty}_{k=0} \bb{G}\bb{\Delta}^{k}\|_{\mathcal{H}_2} \leq  \sum\limits^{\infty}_{k=0} n \|\bb{G}\|_{2 \leftarrow 1} \lambda^k \leq \|\bb{G}\|_{2 \leftarrow 1}\frac{n}{1-\lambda}.
    \end{align*}
\end{proof}
\vspace{-5mm}
\lemref{lem:robust relax} gives an upper bound on the objective function of problem \eqref{eq: prob st SLS}. Hence, problem \eqref{eq: prob st SLS} can now be written as:
\begin{equation}\label{eq: prob st SLS relax}
    \begin{aligned}
    &\underset{\mathbf{\Phi}_{x}, \mathbf{\Phi}_{u}, \lambda}{\text{min}} & &\mathbb{E}_{\Vin_t \sim f}\left\Vert\begin{bmatrix}\mathbf{Q} & \mathbf{R}\end{bmatrix} \begin{bmatrix}\mathbf{\Phi}_{x}\\\mathbf{\Phi}_{u}\end{bmatrix}\right\Vert_{2 \leftarrow 1}\frac{n}{1-\lambda}\\
    & \text{s.t.} &  &\begin{aligned}
              &Z^T_{AB}\left[\begin{array}{c}\mathbf{\Phi}_{x}\\\mathbf{\Phi}_{u}\end{array}\right] = \mathbf{\Delta},\ \ \left\Vert{\mathbf{\Delta}}\right\Vert_{\mathcal{L}_1}<\lambda,\\
              &[\phi^{t}_x]_{i} = [[\phi^{t}_x]_{i}]_{\Vout_t(i)},\ [\phi^{t}_u]_{i} = [[\phi^{t}_u]_{i}]_{\Vout_t(i)}\\
              & \forall\ t=0,1,\ldots
              \end{aligned}
    \end{aligned}
\end{equation}

Problem \eqref{eq: prob st SLS relax} is a quasi-convex problem. Moreover, the $\mathcal{L}_1$-norm constraint can be decomposed in a convenient manner that facilitates distributed computation. In fact, applying our notational convention \eqref{eq:notat1}-\eqref{eq:notat3}, on $\bb{\Delta}$, we get the relation
\begin{align}\label{eq:decomprob}
    [Z^T_{AB}]\begin{bmatrix}[\phi^{t}_x]_{i} \\ [\phi^{t}_u]_{i}\end{bmatrix} = [\delta^{t}]_{i},
\end{align}
where $\Deltatkij{t}{:}{:}{i} := [\delta^{t}]_{i}$. Moreover, it can be verified that the term $\|\bb{\Delta}\|_{{\mathcal{L}_1}}$ can be equivalently written as
\begin{align}\label{eq:l1decomp}
    \|\bb{\Delta}\|_{{\mathcal{L}_1}} = \max\limits_{t,i} \|[\delta^{t}]_{i}\|_1,
\end{align} 
which allows to write the robustness constraint \eqref{eq: prob st SLS relax} as
\begin{align*}
    \max\limits_{t,i} \|[\delta^{t}]_{i}\|_1 \leq \lambda.
\end{align*} 
Recalling, that the $\|.\|_{2\leftarrow 1}$ norm also decomposes w.r.t. $[\phi^{t}_x]_{i}$ and $[\phi^{t}_u]_{i}$, we can combine equation \eqref{eq: G21},\eqref{eq:decomprob} and optimization \eqref{eq: prob st SLS relax}, to compute a relaxation of the robust optimal control problem \eqref{eq: prob st SLS} that decomposes. Furthermore, this allows for distributed computation, except for a global optimization over $\lambda$ and $c$, which can be solved via bisection in a distributed way through a consensus algorithm. In summary, every subsystem $i$ will solve the following optimization:
\begin{equation}\label{eq: SLS robust distr}
    \begin{aligned}
    &\underset{[\phi^{t}_x]_{i}, [\phi^{t}_u]_{i}, \lambda, c}{\text{min}} & &c\\
    & \text{s.t.} &  &\begin{aligned}
              & \mathbb{E}_{\Vin_t \sim f}\sqrt{\left\Vert\begin{bmatrix}[\mathbf{Q}]_i & [\mathbf{R}]_i\end{bmatrix} \begin{bmatrix}[\phi^{t}_x]_{i}\\ [\phi^{t}_u]_{i}\end{bmatrix}\right\Vert^2_{F}}\frac{n}{1-\lambda} \leq c\\
              &\left\Vert{[Z^T_{AB}]\begin{bmatrix}[\phi^{t}_x]_{i} \\ [\phi^{t}_u]_{i}\end{bmatrix}}\right\Vert_{1}<\lambda,\\
              &[\phi^{t}_x]_{i} = [[\phi^{t}_x]_{i}]_{\Vout_t(i)},\ [\phi^{t}_u]_{i} = [[\phi^{t}_u]_{i}]_{\Vout_t(i)}\\
              & \forall\ t=0,1,\ldots
              \end{aligned}
    \end{aligned}
\end{equation}

where, $[\mathbf{Q}]_i$ and $[\mathbf{R}]_i$ are the corresponding local blocks of subsystem $i$ for $Z_{AB}$, $\bb{Q}$ and $\bb{R}$ respectively.
\vspace{-1mm}
\subsection{Stability guarantees for a time-varying implementation}
\vspace{-3mm}
It is important to note that the controller synthesis problem \eqref{eq: SLS robust distr} corresponds to a LTI controller. However, the actual closed loop is time-varying due to the time-varying communication dropouts that can occur in the network. 
In the following lemma we show that column-wise switches\footnote{A column-wise switch means that we can change one or more columns of the transfer matrices ${\mathbf{\Phi}}_x$ and ${\mathbf{\Phi}}_u$.} between any number of $\mathcal{L}_1$ robust controllers do not render the closed-loop unstable. 
\\
\begin{lemma}
\label{thm: stability}
Consider the controller \eqref{eq:implementation}, where $\{[\phi^t_x]_i, [\phi^t_u]_i\} \subset \{\{\psi^{s}_{x},\psi^{s}_{u}\}\}^{M}_{s=1}$ where each $\{\psi^{s}_{x},\psi^{s}_{u}\}$ are partial system responses that satisfy
\begin{align*}
    \left\Vert{[Z^T_{AB}]\begin{bmatrix}[\psi^{s}_x] \\ [\psi^{s}_u]\end{bmatrix}}\right\Vert_{1}<1,
\end{align*}
then the closed loop system is stable. 
\end{lemma}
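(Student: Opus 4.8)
The plan is to reduce the statement to the robust‑stability criterion of \thmref{thm:robstab}, applied in its time‑varying form \citep{ACCho2019}, and then to control the resulting mismatch operator in the $\mathcal{L}_1$ norm. A column‑wise switching policy produces time‑varying transfer matrices $\PhiX,\PhiU$ whose block column indexed by $(t,i)$ — i.e.\ the pair $\{[\phi^{t}_x]_{i},[\phi^{t}_u]_{i}\}$ — equals some $\{[\psi^{s}_x],[\psi^{s}_u]\}$ with $s=s(t,i)\in\{1,\dots,M\}$. Each $\psi^{s}$ is a well‑defined partial response, so the assembled $\PhiX,\PhiU$ are legitimate causal operators, and they satisfy the affine constraint $Z^T_{AB}\left[\begin{smallmatrix}\PhiX\\\PhiU\end{smallmatrix}\right]=\bb{\Delta}$ for the operator $\bb{\Delta}$ obtained by assembling the corresponding columns. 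Hence the controller \eqref{eq:implementation} realized by the switching policy is exactly the implementation considered in \thmref{thm:robstab}, and by that theorem the closed loop is internally stable if and only if $(I+\bb{\Delta})^{-1}$ is stable; it therefore suffices to show $\|\bb{\Delta}\|_{\mathcal{L}_1}<1$.

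The key observation is that $Z^T_{AB}$ acts by left multiplication and so maps columns to columns: by \eqref{eq:decomprob}, the block column $[\delta^{t}]_{i}$ of $\bb{\Delta}$ equals $[Z^T_{AB}]\left[\begin{smallmatrix}[\psi^{s(t,i)}_x]\\ [\psi^{s(t,i)}_u]\end{smallmatrix}\right]$. By hypothesis each candidate satisfies $\bigl\|[Z^T_{AB}]\left[\begin{smallmatrix}[\psi^{s}_x]\\ [\psi^{s}_u]\end{smallmatrix}\right]\bigr\|_{1}<1$, and since there are only finitely many of them I would set $\bar{\lambda}:=\max_{1\le s\le M}\bigl\|[Z^T_{AB}]\left[\begin{smallmatrix}[\psi^{s}_x]\\ [\psi^{s}_u]\end{smallmatrix}\right]\bigr\|_{1}<1$. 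Then every block column of $\bb{\Delta}$ has $\ell_1$‑norm at most $\bar\lambda$, and by the decomposition \eqref{eq:l1decomp} we get $\|\bb{\Delta}\|_{\mathcal{L}_1}=\max_{t,i}\|[\delta^{t}]_{i}\|_{1}\le\bar\lambda<1$.

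To finish, I would invoke \lemref{lem:Neuma}: since $\|\bb{\Delta}\|_{\mathcal{L}_1}\le\bar\lambda<1$, the Neumann series $(I+\bb{\Delta})^{-1}=\sum_{k=0}^{\infty}(-\bb{\Delta})^{k}$ converges in the $\mathcal{L}_1$ operator norm, so $(I+\bb{\Delta})^{-1}$ is a bounded, and hence stable, operator. By \thmref{thm:robstab} this implies that the closed loop under \eqref{eq:implementation} is internally stable, which is the claim.

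I expect the main obstacle to be the bookkeeping in the first step rather than any hard estimate: one must check that assembling $\PhiX,\PhiU$ — and therefore $\bb{\Delta}$ — column by column from the pool $\{\psi^s\}_{s=1}^{M}$ produces operators that genuinely meet the hypotheses of the time‑varying version of \thmref{thm:robstab}, and in particular that it is precisely the \emph{column}‑wise nature of the switch that keeps the affine constraint $Z^T_{AB}[\,\cdot\,]=\bb{\Delta}$, and hence the $\mathcal{L}_1$ bound, decoupled across columns (the same argument would not survive a row‑wise switch). A secondary but indispensable point is the use of the finiteness of $M$ to upgrade the per‑candidate strict inequality to the uniform strict bound $\bar\lambda<1$, without which the Neumann‑series step could fail at the boundary.
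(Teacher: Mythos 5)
Your proposal is correct and follows essentially the same route as the paper: exploit the column-wise decomposition \eqref{eq:l1decomp} to conclude $\|\bb{\Delta}\|_{\mathcal{L}_1}\le\max_s\bigl\|[Z^T_{AB}]\bigl[\begin{smallmatrix}[\psi^{s}_x]\\ [\psi^{s}_u]\end{smallmatrix}\bigr]\bigr\|_{1}<1$, then invoke the small-gain/robust-SLS argument (your explicit use of \lemref{lem:Neuma} and \thmref{thm:robstab} is just the paper's ``small gain theorem'' step spelled out). Your added remarks on the finiteness of $M$ giving a uniform bound $\bar\lambda<1$ and on the time-varying applicability of the robust parametrization are sound refinements of the same proof, not a different approach.
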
{}
\begin{proof}
This result follows directly from the relationship \eqref{eq:l1decomp}. Recall that $\bb{\Delta}$ can be written as 
\begin{align*}
    \|\bb{\Delta}\|_{{\mathcal{L}_1}} = \max\limits_{t,i} \|[\delta^{t}]_{i}\|_1,
\end{align*} 
but since $\{[\phi^t_x]_i, [\phi^t_u]_i\} \in \{\{\psi^{s}_{x},\psi^{s}_{u}\}\}^{M}_{s=1}$, we have
\begin{align*}
    \|\bb{\Delta}\|_{{\mathcal{L}_1}} = \max\limits_{t,i} \|[\delta^{t}]_{i}\|_1 = \max\limits_{s} \left\Vert{[Z^T_{AB}]\begin{bmatrix}[\psi^{s}_x] \\ [\psi^{s}_u]\end{bmatrix}}\right\Vert_{1} < 1,
\end{align*}
which proves stability per small gain theorem.

\end{proof}{}
\vspace{-3mm}
Notice that this lemma is only valid under the assumption that we have $\mathcal{L}_1$ robustness, i.e. $\|\Delta\|_{\mathcal{L}_1}<1$, and given that the combinations are done column-wise. If this were not the case, an argument similar to the ones presented in \citep{ACCho2019} would need to be made. 

\section{Offline robustness to dropouts}
\vspace{-3mm}
In this section, we introduce a strategy to make the controller robust to communication dropouts. We take advantage of the formulation \eqref{eq: SLS robust distr} and \lemref{thm: stability} and we propose an offline controller synthesis that accounts for communication dropouts, making the resulting controller intrinsically robust to communication dropouts. Since robustness is guaranteed by the synthesis process, we can implement this controller offline. The resulting controller is a very low-cost controller robust to package losses. 
\vspace{-1mm}
\subsection{Offline controller synthesis}
\vspace{-3mm}
Let us start by considering that the general problem \eqref{eq: prob st SLS} with time invariant $\mathbf{\Phi}_x$ and $\mathbf{\Phi}_u$. We need to guarantee that the resulting control will be robust to all the dropouts suffered by the communication network. To do that, we need to consider all the different sparsity patterns induced by all the possible dropouts, and guarantee satisfaction of the constraints for all of them. According to the definition, $\Dout_t$ is associated with a given dropout at a certain time $t$. From equations \eqref{eq:sparsity}, we have that each $\Dout_t$ induces a sparsity pattern in $\mathbf{\Phi}_x$ and $\mathbf{\Phi}_u$. We can characterize all the dropouts scenarios generated by the probability mass function $f$, under which the controller is required to be robust as the set of sets $\mathscr{D}$, so $\Dout_t\in\mathscr{D}$ for each of the dropouts. Then, \eqref{eq: prob st SLS} can be written as:
\begin{equation}\label{eq: offline}
    \begin{aligned}
    &\underset{\mathbf{\Phi}_{x}, \mathbf{\Phi}_{u}}{\text{min}} & &\sum\limits_{\mathcal{S} \in \mathscr{D}} f(\mathcal{S})\left\Vert\begin{bmatrix}\mathbf{Q} & \mathbf{R}\end{bmatrix} \begin{bmatrix}[\mathbf{\Phi}_{x}]_{\mathcal{S}} \\ [\mathbf{\Phi}_{u}]_{\mathcal{S}} \end{bmatrix}(I+\mathbf{\Delta})^{-1}\right\Vert_{\mathcal{H}_2}\\
    & s.t. &  &\begin{aligned}
              &\left\|Z^T_{AB}\begin{bmatrix}[\mathbf{\Phi}_{x}]_{\mathcal{S}} \\ [\mathbf{\Phi}_{u}]_{\mathcal{S}} \end{bmatrix}\right\|_{\mathcal{L}_1} <1, \forall\ \mathcal{S} \in\mathscr{D},\\
              &[[\bb{\Phi}_x]_{i}]_{V/\Vmaxout} = 0,\ 
               [[\bb{\Phi}_u]_{i}]_{V/\Vmaxout} = 0.
              \end{aligned}
    \end{aligned}
\end{equation}

Notice that the tractability of \eqref{eq: offline} depends on the size of $\mathscr{D}$ and $\Vmaxin$ and $\Vminout$. Even thought this can lead to a combinatorial explosion, since the problem can be solved in a distributed manner via equation \eqref{eq: SLS robust distr} and the size of $\Vmaxin$ and $\Vminout$ is much smaller than the size of the network in structured systems, this represents a feasible approach in practice. 

 \vspace{-1mm}
\subsection{Offline controller implementation}
\vspace{-3mm}
The controller implementation is distributed and can be described by equations \eqref{eq:implementation}, where the maps $\Phi_x$ and $\Phi_u$ are obtained from the distributed synthesis \eqref{eq: SLS robust distr} with the additional robustness constraints as discussed in the previous subsection.

Notice that although the controller was restricted to be LTI, the implementation of such a controller will be time varying due to the time varying nature of the dropouts. This controller is however guaranteed to be stable by virtue of \lemref{thm: stability}.

\section{Online robustness to dropouts}
\vspace{-3mm}
Here we introduce a different strategy to make the controller robust to communication dropouts. Once again, we take advantage of the formulation \eqref{eq: SLS robust distr} and \lemref{thm: stability} and we propose an online controller synthesis that is able to sense communication dropouts instantaneously. In this design performance is not hurt by the robustness, at the expense that the controller needs an online implementation and is therefore time-varying.
\vspace{-1mm}
\subsection{Online controller synthesis}
\vspace{-3mm}
We start by considering the general problem \eqref{eq: prob st SLS} were the controller is allowed to be time varying. The goal is for the controller to perform the optimal action given the current dropout. In order to do so, it is important to recall that the controller is able to sense instantaneously the dropout experienced. This is a reasonable assumption assuming a handshake protocol is in place and communication occurs at a local scale. Hence, the optimal strategy would be to use the optimal $\Phi_x$ and $\Phi_u$ for each dropout when it occurs, therefore switching between optimal controllers based on the dropout experienced. Under this premise, the synthesis reduces to solving offline optimization \eqref{eq: prob st SLS} for all dropout cases, which can be rewritten as:
\begin{equation}\label{eq: online}
    \begin{aligned}
    &\underset{\mathbf{\Phi}^{\mathcal{S}}_{x}, \mathbf{\Phi}^{\SetS}_{u}}{\text{min}} & &\left\Vert\begin{bmatrix}\mathbf{Q} & \mathbf{R}\end{bmatrix} \begin{bmatrix}\mathbf{\Phi}_{x}\\\mathbf{\Phi}_{u}\end{bmatrix}(I+\mathbf{\Delta})^{-1}\right\Vert_{\mathcal{H}_2}\\
    & s.t. &  &\begin{aligned}
              &\left\|Z^T_{AB}\left[\begin{array}{c}\mathbf{\Phi}^{\SetS}_{x}\\\mathbf{\Phi}^{\SetS}_{u}\end{array}\right]\right\|_{\mathcal{L}_1}<1,
              \end{aligned}
    \end{aligned}
\end{equation}
for each dropout pattern $\SetS\in\mathscr{D}$. The number of optimizations to solve depends on the size of $\mathscr{D}$ and the different sparsity patterns that they generate in $\bb{\Phi}_x$ and $\bb{\Phi}_u$. Notice that problem \eqref{eq: online} can be solved offline in a distributed manner via \eqref{eq: SLS robust distr}. 
\vspace{-1mm}
\subsection{Online controller implementation}
\vspace{-3mm}
Once the controllers have been synthesized offline, the online implementation easily follows. If at every time $t$ dropout $\Dout_t(i)$ is sensed, subsystem $i$ implements the $[\phi_x^{t+1}]_i$ and $[\phi_u^{t+1}]_i$ that were synthesized according to the sparsity pattern induced by $\Dout_t(i)$. The corresponding $[\phi_x^{t+1}]_i$ and $[\phi_u^{t+1}]_i$ are implemented according to \eqref{eq:implementation}. This repeats for each $t$.

\section{Simulation Experiments}
\vspace{-3mm}
In this section, we present simulation experiments of the two strategies introduced in this paper to deal with dropouts and we compare their performance for different dropout scenarios. To perform these experiments we choose the following dynamical system consisting of $10$ nodes:
$$[x]_i(t+1) = 1.2\sum_{j=i-2}^{i+2} \alpha_{ij}[x]_{j}(t) + 1.2[u]_i(t),$$
where $\alpha_{ij} = .4$ for $j=i\pm1$ and $\alpha_{ij} = .2$ otherwise. For $i=1,10$ $\alpha_{ii}=.6$. Negative $j$ are not considered. The communication is described by the following adjacency matrix:
\begin{equation*} 
    G= \left[\begin{array}{cccccc}1 & 1 & 0 & 0 & 0 & \dots \\ 1 & 1 & 1 & 0 & 0& \dots\\ 0 & 1 & 1 & 1 & 0 & \dots\\ \vdots &  & \ddots &  \ddots &\ddots &\ddots \end{array}\right]^d,
\end{equation*}
and we consider $d$ to be the dropout parameter. If no dropouts are present $d=5$. A dropout changes the value of $d$, so $d\in\mathcal{D}=\left\{2,3,4,5\right\}$. The probability distribution of $d$ over $\mathcal{D}$ is uniform. We define one $[d]_i\in\mathcal{D}$ for each subsystem $i$, which represents the sparsity induced in the $i^{th}$ column of $(G)^{[d_i]}$. 

The cost at each time is computed as $C(t)=x(t)^{\mathsf{T}}x(t)+u(t)^{\mathsf{T}}u(t)$, and the total cost for each simulation is computed as $C=\sum_{t=0}^{T} C(t)$ for $T=100$. The simulations are run comparing the two strategies -- offline and online -- subject to the same random noise and the same dropout scenario. An illustrative example of the state, input and communication topology is introduced in \figref{fig: state}.

\begin{figure}[h]
    \includegraphics[width=8.7cm]{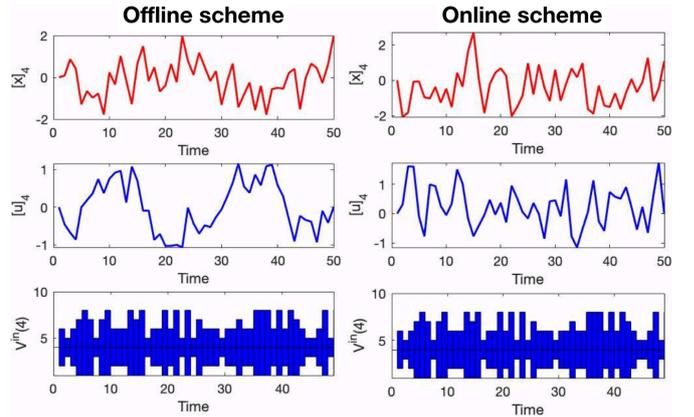}
    \caption{On the left: state, input and communication topology for subsystem $4$ using the offline strategy. On the right: identical representation for the online scheme.}
    \label{fig: state}
\end{figure}

Further, we compare the moving average defined as $M=\frac{1}{T}\sum_{t=1}^{T}\mathbb{E}[C(t)]$, where $\mathbb{E}[C(t)]$ is by averaging the cost under $10$ different Gaussian noise processes. We do so for different dropout scenarios. As illustrated by \figref{fig: average}, the online strategy performs better than the offline one. However, the gap is not dramatic, so the offline strategy is more cost efficient since it does not require an online implementation. Simulations suggest that both of these strategies are robust to communication dropouts while providing good performance, and the choice of using one versus the other is context dependent. 

\begin{figure}[h]
    \centering
    \includegraphics[width=9cm]{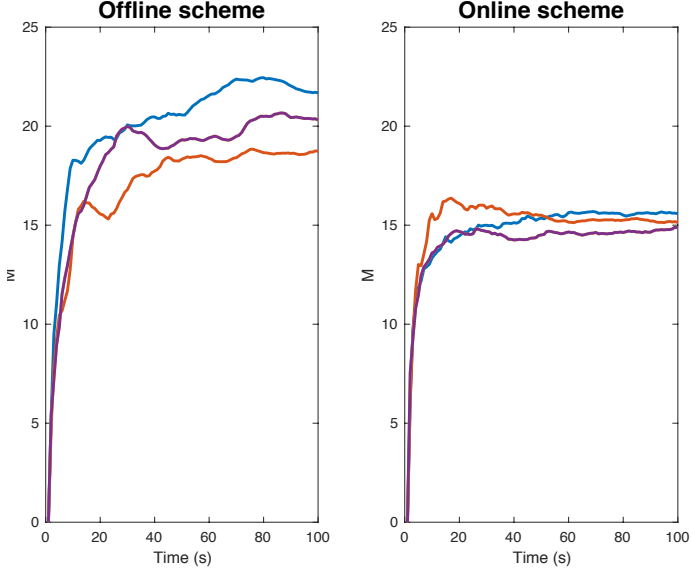}
    \caption{On the left: moving average LQR cost computed over $10$ disturbance processes for three different dropout scenarios for the offline scheme. On the right: identical representation for the online scheme.}
    \label{fig: average}
\end{figure}

\section{Conclusion}
\vspace{-3mm}
We presented two different strategies to deal with information packages dropouts in the communication network of distributed controllers by leveraging the SLS framework. The first strategy consists of an offline synthesis of a SLS controller constrained such that the resulting controller is inherently robust to communication dropouts. This controller can then be implemented in an offline fashion with the guarantee that it will be robust touts. The second strategy consists of a synthesis of a collection of SLS controllers, each of them being optimal for a certain sparsity pattern. The implementation of the controller is carried out online, and each agent is able to choose a realization at each time step from the collection of synthesized controllers based on the current communication topology originated by the dropouts, which it can sense instantaneously. Notice that, although the controllers are synthesized as LTI, the controller implemented are time varying due to the time-varying nature of the communication network. We show in \lemref{thm: stability} that the controllers implemented are internally stabilizing. We also provide a relaxation to the robust version of SLS that allows for a distributed computation.

This work represents a first step into the use of SLS as a tool to tackle communication problems in a networked control setting. Here we only discuss the extensions for the linear time-varying case, but remark that it is an interesting open problem whether the presented techniques can be extended to a even a broader setting, using the SLS approach for nonlinear systems introduced in \citep{ho2020system}. As for future work, it remains an open question how to tackle communication dropouts in the case where delay is present. Furthermore, we plan to exploit the connections with distributed and localized model predictive control \citep{amo_alonso_distributed_2019}, since communication in this scheme is key and drop of packages is likely in real-life applications. Another application could be the coalitional control framework~ \citep{fele_coalitional_2018}, where cooperating local controllers are clustered in disjoint groups with intermittent communication to promote a trade-off between closed-loop performance and coordination overheads. 

\appendix
 \section{Supplementary proofs}      
\vspace{-3mm}

\begin{lemma}\label{lem:Neuma}
Let $\bb{\Delta}$ be a linear bounded operator on $\mathcal{L}_1$ and assume $\|\bb{\Delta}\|_{\mathcal{L}_1} <1$, then $(\bb{I} - \bb{\Delta})^{-1}$ exists, can be written equivalently as
$$(\bb{I} - \bb{\Delta})^{-1} = \sum\limits^{\infty}_{k=0} \bb{\Delta}^{k}$$
and is bounded by $\|(\bb{I} - \bb{\Delta})^{-1}\|_{\mathcal{L}_1} \leq \frac{1}{1-\lambda}$
where $\bb{\Delta}^{k} = \underbrace{\bb{\Delta} \circ \bb{\Delta} \circ \dots \circ \bb{\Delta} }_{k}$.
\end{lemma}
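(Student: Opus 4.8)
The plan is to run the classical Neumann series argument inside the Banach algebra of bounded linear operators on $\mathcal{L}_1$, equipped with the induced $\mathcal{L}_1$-operator norm, which is submultiplicative. Set $\lambda := \|\bb{\Delta}\|_{\mathcal{L}_1} < 1$ and consider the partial sums $\bb{S}_N := \sum_{k=0}^{N} \bb{\Delta}^{k}$, with $\bb{\Delta}^{0} := \bb{I}$.

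First I would show $\{\bb{S}_N\}$ is Cauchy in operator norm: for $M > N$, submultiplicativity gives $\|\bb{S}_M - \bb{S}_N\|_{\mathcal{L}_1} \leq \sum_{k=N+1}^{M} \|\bb{\Delta}\|_{\mathcal{L}_1}^{k} \leq \sum_{k=N+1}^{\infty} \lambda^{k} = \frac{\lambda^{N+1}}{1-\lambda} \to 0$ as $N \to \infty$. Since $\mathcal{L}_1$ is a Banach space, the space of bounded operators on it is complete, so $\bb{S}_N$ converges in operator norm to a bounded operator $\bb{S} := \sum_{k=0}^{\infty}\bb{\Delta}^{k}$. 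Passing to the limit in the triangle inequality then yields $\|\bb{S}\|_{\mathcal{L}_1} \leq \sum_{k=0}^{\infty}\lambda^{k} = \frac{1}{1-\lambda}$, the claimed bound.

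Next I would verify $\bb{S}$ is a two-sided inverse of $\bb{I}-\bb{\Delta}$. The telescoping identity $(\bb{I}-\bb{\Delta})\bb{S}_N = \bb{S}_N(\bb{I}-\bb{\Delta}) = \bb{I} - \bb{\Delta}^{N+1}$ holds for every $N$, and $\|\bb{\Delta}^{N+1}\|_{\mathcal{L}_1} \leq \lambda^{N+1} \to 0$. Because left- and right-composition with the fixed bounded operator $\bb{I}-\bb{\Delta}$ are continuous in operator norm, letting $N \to \infty$ gives $(\bb{I}-\bb{\Delta})\bb{S} = \bb{S}(\bb{I}-\bb{\Delta}) = \bb{I}$, so $(\bb{I}-\bb{\Delta})^{-1}$ exists and equals $\bb{S} = \sum_{k=0}^{\infty}\bb{\Delta}^{k}$.

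The argument is essentially routine; the only points that need to be stated cleanly rather than overcome are (i) that the $\mathcal{L}_1$-induced operator norm is genuinely submultiplicative, so that $\|\bb{\Delta}^{k}\|_{\mathcal{L}_1} \leq \|\bb{\Delta}\|_{\mathcal{L}_1}^{k}$, and (ii) that the ambient operator space is complete, so Cauchy sequences converge to a bounded limit. Both are standard consequences of $\mathcal{L}_1$ being Banach and the norm being an induced norm, and I do not anticipate any genuine obstacle beyond invoking them.
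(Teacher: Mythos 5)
Your proof is correct: it is the classical Neumann series argument in the Banach algebra of bounded operators on $\mathcal{L}_1$, which is precisely the standard fact the paper relies on --- indeed the paper states Lemma~\ref{lem:Neuma} without proof, so your write-up supplies exactly the intended (and only natural) argument. The one cosmetic point is that $\lambda$ appears in the lemma's bound without being defined there; your reading $\lambda := \|\bb{\Delta}\|_{\mathcal{L}_1}$ is the correct one and is consistent with how the bound is used in Lemma~\ref{lem:robust relax}.
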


\textbf{Proof of Lemma \lemref{lem:G21}}
\vspace{-3mm}
\begin{proof} 
Decompose $\bb{w}$ into $$\bb{w} = \sum^{n}_{i=1}\sum^{\infty}_{t=0} \bb{s}^{(i,t)} w_{i,t},$$ where $\bb{s}^{(i,t)}$ are dirac sequences, i.e. $s^{(i,t)}_{i,t} = 1$ and $s^{(i,t)}_{j,k} = 0$ for all other times $k$ and vector entries $j$. Now, due to linearity and triangle inequality we have
\begin{align*}
    \left\|\bb{\Phi}\bb{w}\right\|_2 = \|\sum^{n}_{i=1}\sum^{\infty}_{t=0} \bb{\Phi}\bb{s}^{(i,t)} w_{i,t}\|_2 \leq \sum^{n}_{i=1}\sum^{\infty}_{t=0} \|\bb{\Phi}\bb{s}^{(i)}\|_2 |w_{i,t}|,
\end{align*}
where we dropped the $t$ super-index due to time-invariance of $\bb{G}$. It follows the inequality
\begin{align*}
    \left\|\bb{\Phi}\bb{w}\right\|_2 &\leq \max_i \sqrt{\|[\phi^{0}]_{i}\|^{2}_{F}} \sum^{n}_{i=1}\sum^{\infty}_{t=0} |w_{i,t}|\\
    & \leq \|\bb{\Phi}\|_{2 \leftarrow 1} \|\bb{w}\|_{1}.
\end{align*}
The result follows, because the left-hand bound can always be achieved with an appropriately chosen dirac sequence $\bb{w}$.
\end{proof}

\bibliography{references}         

\end{document}